\theoremstyle{plain}
\newtheorem{thm}{Theorem}[section]
\newtheorem{lem}[thm]{Lemma}
\theoremstyle{remark}
\newtheorem{rem}[thm]{Remark}
\theoremstyle{definition}
\newtheorem{assum}[thm]{Assumption}
\numberwithin{equation}{section}
\numberwithin{figure}{section}
\numberwithin{table}{section}
\title{\Large\bf Kernel-based collocation methods for Zakai equations}
\author{Yumiharu Nakano\\[1em]
        \small{Department of Mathematical and Computing Science, School of Computing} \\
        \small{Tokyo Institute of Technology} \\
        \small{W8-28, 2-12-1, Ookayama, Meguro-ku, Tokyo 152-8550, Japan} \\
		\small{e-mail: nakano@c.titech.ac.jp}
}
\date{\today}
\begin{document}

\maketitle

\begin{abstract}
We examine an application of the kernel-based interpolation 
to numerical solutions for Zakai equations in nonlinear filtering, and aim to prove its 
rigorous convergence. To this end, we find the class of kernels and the structure of 
collocation points explicitly under which the process of iterative interpolation is stable. 
This result together with standard argument in error estimation shows that 
the approximation error is bounded by the order of the square root of the time step and 
the error that comes from a single step interpolation. 
Our theorem is well consistent with the results of numerical experiments. 

\begin{flushleft}
{\bf Key words}: 
Zakai equations, kernel-based interpolation,  
stochastic partial differential equations, radial basis functions. 
\end{flushleft}
\begin{flushleft}
{\bf AMS MSC 2010}: 
60H15, 65M70, 93E11.
\end{flushleft}
\end{abstract}



\section{Introduction}\label{sec:1}
We are concerned with numerical methods for Zakai equations, 
linear stochastic partial differential equations of the form 
\begin{equation}
\label{eq:1.1}
\begin{split}
  du(t,x) = L_0u(t,x)dt  + \sum_{k=1}^mL_ku(s,x)dW_k(t),  
    \quad 0\le t\le T,   
\end{split}
\end{equation}
with initial condition $u(0,x)=u_0(x)$, 
where the process $\{W(t)=(W_1(t),\ldots,W_m(t))\}_{0\le t\le T}$ is an $m$-dimensional 
standard Wiener process on a complete probability space 
$(\Omega,\mathcal{F},\mathbb{P})$. 
Here, for each $k=0,1,\ldots, m$, the partial differential operator $L_k$ is given by 
\begin{align*}
 L_0f(x)&=\frac{1}{2}\sum_{i,j=1}^d\frac{\partial^2}{\partial x_i\partial x_j} 
 (a_{ij}(x)f(x)) 
  + \sum_{i=1}^d\frac{\partial}{\partial x_i}(b_i(x)f(x)), \\  
 L_{k}f(x) &= \beta_{k}(x)f(x) +\sum_{i=1}^d\frac{\partial}{\partial x_i} 
   (\gamma_{ik}(x)f(x)), \quad k=1,\ldots, m, 
\end{align*}
where $a=(a_{ij})$ is $\mathbb{R}^{d\times d}$-valued, 
$b=(b_i)$ is $\mathbb{R}^d$-valued, 
$\beta=(\beta_k)$ is $\mathbb{R}^{m}$-valued, 
$\gamma=(\gamma_{ik})$ is $\mathbb{R}^{d\times m}$-valued, 
and $u_0$ is $\mathbb{R}$-valued, 
all of which are defined on $\mathbb{R}^d$.
The conditions for these functions are described in Section \ref{sec:2} below.  

It is well known that solving Zakai equations is amount to computing the optimal filter for 
diffusion processes.  
We refer to Rozovskii \cite{roz:1990}, Kunita \cite{kun:1990}, 
Liptser and Shiryaev \cite{lip-shi:2001}, Bensoussan \cite{ben:1992}, 
Bain and Crisan \cite{bai-cri:2009}, and the references therein 
for Zakai equations and their relation with nonlinear filtering. 
It is also well known that for linear diffusion processes the optimal filters allow for 
finite dimensional realizations, i.e., they can be represented by 
some stochastic and deterministic differential equations in finite dimensions. 
For nonlinear diffusion processes, it is difficult to obtain such realizations except 
some special cases (see Bene{\v{s}} \cite{ben:1981} and \cite{ben:1992}). 
Thus one may be led to numerical approach to Zakai equations for computing the optimal filter. 
Several efforts have been made to obtain approximation methods for the 
equations during the past several decades. For example,  the finite difference method 
(see Yoo \cite{yoo:1999}, Gy{\"o}ngy \cite{gyo:2014} and the references therein), 
the particle method (see Crisan et al.~\cite{cri-etal:1998}), 
a series expansion approach (Lototsky et al.~\cite{lot-etal:1997}), 
Galerkin type approximation (Ahmed and Radaideh \cite{ahm-rad:1997} and 
Frey et al.~\cite{fre-etal:2013})
and the splitting up method (Bensoussan et al.~\cite{ben-etal:1990}). 

In the present paper, we examine the approximation of $u(t,x)$ by a collocation method with 
kernel-based interpolation. Given a points set 
$\Gamma=\{x_1,\ldots,x_N\}\subset\mathbb{R}^d$ and a positive definite function  
$\Phi:\mathbb{R}^d\to\mathbb{R}$, the function  
\[
 I(f)(x):=\sum_{j=1}^N(A^{-1}f|_{\Gamma})_j\Phi(x-x_j), \quad x\in\mathbb{R}^d, 
\]
interpolates $f$ on $\Gamma$. Here, $A=\{\Phi(x_j-x_{\ell})\}_{j,\ell=1,\ldots,N}$, 
$f|_{\Gamma}$ is the column vector composed of $f(x_j)$, 
$j=1,\ldots,N$, and 
$(A^{-1}z)_j$ denotes the $j$-th component of $A^{-1}z$ for $z\in\mathbb{R}^N$.  
Thus, with time grid $\{t_0,\ldots,t_n\}$, the function $u^h$ recursively defined by  
\begin{align*}
 u^h(t_i,x) &= u^h(t_{i-1},x)+L_0I(u^h(t_{i-1},\cdot)(x)(t_i-t_{i-1}) \\ 
  &\quad + \sum_{k=1}^mL_kI(u^h(t_{i-1},\cdot))(x)(W_k(t_i)-W_k(t_{i-1})), 
 \quad i=0,\ldots,n,\;\; x\in \mathbb{R}^d,  
\end{align*}
is a good candidate for an approximate solution of \eqref{eq:1.1}. 
The approximation above can be seen as a kernel-based (or meshfree) collocation method for 
stochastic partial differential equations. 
The meshfree collocation method is proposed by Kansa \cite{kan:1990b}, 
where deterministic partial differential equations are concerned. 
Since then many studies on numerical experiments and practical applications 
for this method are generated.  As for rigorous convergence, 
Schaback \cite{sch:2010} and Nakano \cite{nak:2017} study the case of deterministic linear 
operator equations and fully nonlinear parabolic equations, respectively.     
However, at least for parabolic equations, there is little known about 
explicit examples of the grid structure and kernel functions that ensure rigorous convergence. 
An exception is Hon et.al \cite{hon-etal:2014}, where 
an error bound is obtained for a special heat equation in one dimension. 
A main difficulty lies in handling the process of the iterative kernel-based interpolation.  
A straightforward estimates for $|I(f)(x)|$ involves the condition number of the matrix $A$, 
which in general rapidly diverges to infinity (see Wendland \cite{wen:2010}). 
Thus we need to take a different route. 
Our main idea is to introduce a condition on the decay of 
the cardinal function with respect to the interpolant and to choose an appropriate  
approximation domain whose radius goes to infinity such that the interpolation is still effective.  
From this together with standard argument in error estimation we find that 
the approximation error is bounded by the order of the square root of the time step and 
the error that comes from a single step interpolation. 
See Lemma \ref{lem:3.7} and Theorem \ref{thm:3.2} below.  

The structure of this paper is as follows: 
Section \ref{sec:2} introduces some notation, and 
describes the basic results for Zakai equations and 
the kernel-based interpolation, which are used in this paper. 
We derive an approximation method for Zakai equations and 
prove its convergence in 
Section \ref{sec:3}.  Numerical experiments are performed in Section \ref{sec:4}.

\section{Preliminaries}\label{sec:2}

\subsection{Notation}\label{sec:2.1}

Throughout this paper, 
we denote by $a^{\mathsf{T}}$ the transpose of a vector or matrix $a$. 
For $a=(a_i)\in\mathbb{R}^{\ell}$ 
we set $|a|=(\sum_{i=1}^{\ell}(a_{i})^2)^{1/2}$.  
For a multiindex $\alpha=(\alpha_1,\ldots,\alpha_d)$ of nonnegative integers, 
the differential operator $D^{\alpha}$ is defined as usual by  
\begin{equation*}
 D^{\alpha}=\frac{\partial^{|\alpha|_1}}
  {\partial x_1^{\alpha_1}\cdots \partial x_d^{\alpha_d}}  
\end{equation*}
with $|\alpha|_1=\alpha_1+\cdots +\alpha_d$. 
For an open set $\mathcal{O}\subset\mathbb{R}^d$, we denote 
by $C^{\kappa}(\mathcal{O})$ the space of continuous real-valued functions 
on $\mathcal{O}$ with continuous derivatives up to the order $\kappa\in\mathbb{N}$, 
with the norm 
\[
 \|f\|_{C^{\kappa}(\mathcal{O})}=\max_{|\alpha|_1\le\kappa}\sup_{x\in\mathcal{O}}
  |D^{\alpha}f(x)|. 
\]
Further, we denote by $C^{\infty}_0(\mathbb{R}^d)$ 
the space of infinitely differentiable functions on $\mathbb{R}^d$ with compact supports. 
For any $p\in [1,\infty)$ and any open set $\mathcal{O}\subset\mathbb{R}^d$, 
we denote by $L^p(\mathcal{O})$ the space of all measurable functions 
$f:\mathcal{O}\to \mathbb{R}$ such that 
\[
 \|f\|_{L^p(\mathcal{O})}:=\left\{\int_{\mathcal{O}}|f(x)|^pdx\right\}^{1/p}<\infty. 
\]
For $\kappa\in\mathbb{N}$, 
we write $H^{\kappa}(\mathcal{O})$ for the space of all measurable functions 
$f$ on $\mathcal{O}$ such that 
the generalized derivatives $D^{\alpha}f$ exist for all $|\alpha|_1\le \kappa$ and 
that  
\begin{equation*}
 \|f\|_{H^{\kappa}(\mathcal{O})}^2 
 := \sum_{|\alpha|_1\le\kappa}\|D^{\alpha}f\|^2_{L^2(\mathcal{O})}<\infty. 
\end{equation*}
In addition, for $0< r<1$, 
we write $H^{\kappa+r}(\mathcal{O})$ for the space of all measurable functions 
$f$ on $\mathcal{O}$ such that 
the generalized derivatives $D^{\alpha}u$ exist for all $|\alpha|_1\le \kappa$ and 
that $\|f\|_{H^{\kappa+r}(\mathcal{O})}^2 
 := \|f\|_{H^{\kappa}(\mathcal{O})}^2 + |f|_{H^{\kappa+r}(\mathcal{O})}^2<\infty$ with  
\begin{equation*}
  |f|_{H^{\kappa +r}(\mathcal{O})}^2
  =\sum_{|\alpha|_1=\kappa}
  \int_{\mathcal{O}}\int_{\mathcal{O}}\frac{|D^{\alpha}f(x)-D^{\alpha}f(y)|^2}
 {|x-y|^{d+2r}}dxdy. 
\end{equation*}
For $x\in\mathbb{R}$ we use the notation $\lfloor x\rfloor=\max\{n\in\mathbb{Z}: n\le x\}$. 
By $C$ we denote positive constants that may vary from line to line 
and that are independent of $h$ introduced below.

\subsection{Zakai equations}\label{sec:2.2}

We impose the following conditions on the coefficients of the equation \eqref{eq:1.1}: 
\begin{assum}
\label{assum:2.1}
\begin{enumerate}[\rm (i)]
\item All components of the functions $a$, $b$, $\beta$, $\gamma$, 
and $u_0$ are infinitely differentiable with bounded continuous derivatives of any order. 
\item For any $x\in\mathbb{R}^d$,  
\begin{equation*}
 \xi^{\mathsf{T}}(a(x)-\gamma(x)\gamma(x)^{\mathsf{T}})\xi\ge 0, \quad \xi\in\mathbb{R}^d. 
\end{equation*}
\end{enumerate}
\end{assum}
It follows from Assumption \ref{assum:2.1} and  
Gerencs{\'e}r et.al~\cite[Theorem 2.1]{ger-etal:2015} that 
there exists a unique predictable process 
$\{u(t)\}_{0\le t\le T}$ such that the following are satisfied: 
\begin{enumerate}[\rm (i)] 
\item $u(t,\cdot,\omega)\in H^{\nu}(\mathbb{R}^d)$ for 
any $(t,\omega)\in [0,T]\times\Omega_0$, where $\Omega_0\in\mathcal{F}$ with 
$\mathbb{P}(\Omega_0)=1$ and for any $\nu\in\mathbb{N}$; 
\item for $\varphi\in C^{\infty}_0(\mathbb{R}^d)$, 
\begin{equation}
\label{eq:2.1}
\begin{split}
  (u(t),\varphi) = (u_0,\varphi) + \int_0^t(u(s,\cdot),L_0^*\varphi)ds  
    + \sum_{k=1}^m\int_0^t(u(s,\cdot),L_k^*\varphi)dW_k(s),  
    \quad 0\le t\le T, \;\;\text{a.s.}  
\end{split}
\end{equation}
\end{enumerate}
Here,  $(\cdot,\cdot)$ denotes the inner product in $L^2(\mathbb{R}^d)$, and 
for each $k=0,1,\ldots, m$, the partial differential operator $L_{k}^*$ is the 
formal adjoint of $L_k$. 
Moreover, $u(t,x)$ satisfies 
\begin{equation*}
 \mathbb{E}\left[\sup_{0\le t\le T}\|u(t,\cdot)\|^2_{H^{\nu}(\mathbb{R}^d)}\right]
  \le C\|u_0\|^2_{H^{\nu}(\mathbb{R}^d)}, \quad \nu\in\mathbb{N}. 
\end{equation*}
Further, as in \cite[Proposition 3, Section 1.3, Chapter 4]{roz:1990}, 
there exists a version $\tilde{u}$, with 
respect to $x$, of $u$ such that $\tilde{u}(t,x,\omega)\in C^{\infty}(\mathbb{R}^d)$ for 
$(t,\omega)\in [0,T]\times \Omega$ and that for any $\kappa\in\mathbb{N}$ and 
$|\alpha|_1\le\kappa$, 
\begin{equation}
\label{eq:2.2}
 D^{\alpha}\tilde{u}(t,x)=D^{\alpha}u_0(x)+\int_0^tD^{\alpha}L_0\tilde{u}(s,x)dx 
  + \sum_{k=1}^m\int_0^t 
 D^{\alpha}L_k\tilde{u}(s,x)dW_k(s), \quad\text{a.s.}, \;\; (t,x)\in [0,T]\times\mathbb{R}^d. 
\end{equation}
In particular, $\tilde{u}$ is a solution to the Zakai equation in the strong sense, i.e., 
$\tilde{u}$ satisfies 
\begin{equation*}
 \tilde{u}(t,x)=\tilde{u}_0(x)+\int_0^tL_0\tilde{u}(s,x)dx + \sum_{k=1}^m\int_0^t 
 L_k\tilde{u}(s,x)dW_k(s), \quad\text{a.s.}, \;\; (t,x)\in [0,T]\times\mathbb{R}^d.
\end{equation*}
We remark that in (\ref{eq:2.2}) the stochastic integral is taken to be a continuous version 
with respect to $(t,x)$. With this version, \eqref{eq:2.2} holds with 
probability one uniformly on $[0,T]\times\mathbb{R}^d$. 


\subsection{Kernel-based interpolation}\label{sec:2.3}

In this subsection, we recall the basis of the interpolation theory with 
positive definite functions. 
We refer to \cite{wen:2010} for a complete account. 
Let $\Phi: \mathbb{R}^d\to \mathbb{R}$ be a radial and positive definite function, 
i.e., $\Phi(\cdot)=\phi(|\cdot|)$ for some $\phi:[0,\infty)\to\mathbb{R}$ and 
for every $\ell\in\mathbb{N}$, for all pairwise distinct 
$y_1,\ldots, y_{\ell}\in\mathbb{R}^{d}$ and for all 
$\alpha=(\alpha_i)\in\mathbb{R}^{\ell}\setminus\{0\}$, we have 
\begin{equation*}
  \sum_{i,j=1}^{\ell}\alpha_i\alpha_j\Phi(y_i-y_j)>0. 
\end{equation*}
Let $\Gamma=\{x_1,\cdots,x_N\}$ be a finite subset of $\mathbb{R}^d$ and 
put $A=\{\Phi(x_i-x_j)\}_{1\le i,j\le N}$. 
Then $A$ is invertible and thus for any $g:\mathbb{R}^d\to\mathbb{R}$ the function
\begin{equation*}
I(g)(x) = \sum_{j=1}^N(A^{-1}(g|_{\Gamma}))_j\Phi(x-x_j), \quad x\in\mathbb{R}^d, 
\end{equation*}
interpolates $g$ on $\Gamma$. 
If $\Phi\in C(\mathbb{R}^d)\cap L^1(\mathbb{R}^d)$, then 
\begin{equation*}
 \mathcal{N}_{\Phi}(\mathbb{R}^d):=\left\{f\in C(\mathbb{R}^d)\cap L^1(\mathbb{R}^d): 
  \widehat{f}/\sqrt{\widehat{\Phi}}\in L^2(\mathbb{R}^d)\right\}, 
\end{equation*}
called the native space, is a real Hilbert space with the inner product 
\begin{equation*}
 (f,g)_{\mathcal{N}_{\Phi}(\mathbb{R}^d)}=(2\pi)^{-d/2}\int_{\mathbb{R}^d} 
 \frac{\widehat{f}(\xi)\overline{\widehat{g}(\xi)}}{\widehat{\Phi}(\xi)}d\xi 
\end{equation*}
and the norm 
$\|f\|_{\mathcal{N}_{\Phi}(\mathbb{R}^d)}^2:=(f,f)_{\mathcal{N}_{\Phi}(\mathbb{R}^d)}$. 
Here, for $f\in L^1(\mathbb{R}^d)$, the function $\widehat{f}$ is the Fourier transform 
of $f$, defined as usual by 
\begin{equation*}
 \widehat{f}(\xi)=(2\pi)^{-d/2}\int_{\mathbb{R}^d}f(x)e^{-\sqrt{-1}x^{\mathsf{T}}\xi} dx, 
 \quad \xi\in\mathbb{R}^d. 
\end{equation*}
Moreover, $\mathbb{R}^d\times\mathbb{R}^d\ni (x,y)\mapsto \Phi(x-y)$ 
is a reproducing kernel for $\mathcal{N}_{\Phi}(\mathbb{R}^d)$. 
If $\widehat{\Phi}$ satisfies 
\begin{equation}
\label{eq:2.3}
 c_1(1+|\xi|^2)^{-\kappa}\le \widehat{\Phi}(\xi)\le c_2(1+|\xi|^2)^{-\kappa}, 
 \quad \xi\in\mathbb{R}^d, 
\end{equation}
for some constants $c_1,c_2>0$ and $\kappa>d/2$, 
then we have from Corollary 10.13 in \cite{wen:2010} that 
$H^{\kappa}(\mathbb{R}^d)= \mathcal{N}_{\Phi}(\mathbb{R}^d)$ and 
\begin{equation}
\label{eq:2.4}
 c_1\|f\|_{H^{\kappa}(\mathbb{R}^d)}\le \|f\|_{\mathcal{N}_{\Phi}(\mathbb{R}^d)}
 \le c_2\|f\|_{H^{\kappa}(\mathbb{R}^d)}, \quad f\in H^{\kappa}(\mathbb{R}^d). 
\end{equation}
Namely, the native space $\mathcal{N}_{\Phi}(\mathbb{R}^d)$ 
coincides with the Sobolev space $H^{\kappa}(\mathbb{R}^d)$ with equivalent norm. 
Further, we mention that \eqref{eq:2.4} and Corollary 10.25 in \cite{wen:2010} implies 
\begin{equation}
\label{eq:2.5}
 \|I(g)\|_{H^{\kappa}(\mathbb{R}^d)}\le C\|g\|_{H^{\kappa}(\mathbb{R}^d)}, \quad 
 \|g-I(g)\|_{H^{\kappa}(\mathbb{R}^d)}\le C\|g\|_{H^{\kappa}(\mathbb{R}^d)}, 
 \quad g\in H^{\kappa}(\mathbb{R}^d). 
\end{equation}

The so-called Wendland kernel is a typical example of $\Phi$ satisfying 
\eqref{eq:2.3}--\eqref{eq:2.5}, which is defined as follows: for a given $\tau\in\mathbb{N}$,  
set the function $\Phi_{d,\tau}$ satisfying 
$\Phi_{d,\tau}(x)=\phi_{d,\tau}(|x|)$, $x\in\mathbb{R}^d$, where 
\begin{equation*}
 \phi_{d,\tau}(r)=\int_r^{\infty}r_{\tau}\int_{r_{\tau}}^{\infty}r_{\tau-1}\int_{r_{\tau-1}}^{\infty}
 \cdots\: r_2\int_{r_2}^{\infty}r_1\max\{1-r_1,0\}^{\nu}dr_1dr_2\cdots dr_{\tau}, \quad 
 r\ge 0 
\end{equation*}
with $\nu =\lfloor d/2\rfloor +\tau+1$. 
For example, 
\begin{align*}
 \phi_{1,2}(r)&\doteq \max\{1-r,0\}^5(8r^2+5r+1), \\
 \phi_{1,3}(r)&\doteq \max\{1-r,0\}^7(21r^3 + 19r^2 + 7r + 1),  \\
 \phi_{1,4}(r) &\doteq  \max\{1-r,0\}^9(384r^4 + 453r^3 + 237r^2 + 63r + 7),  \\
 \phi_{2,4}(r) &\doteq \max\{1-r,0\}^{10}(429r^4+450r^3+210r^2+50r+5), \\ 
 \phi_{2,5}(r) &\doteq 
 \max\{1-r,0\}^{12}(2048r^5 + 2697r^4 + 1644r^3 + 566r^2 + 108r + 9),  
\end{align*}
where $\doteq$ denotes equality up to a positive constant factor. 

Then, $\Phi_{d,\tau}\in C^{2\tau}(\mathbb{R}^d)$ and 
$\mathcal{N}_{\Phi_{d,\tau}}(\mathbb{R}^d)=H^{\tau+(d+1)/2}(\mathbb{R}^d)$. 
Furthermore, $\Phi_{d,\tau}$ satisfies \eqref{eq:2.3}--\eqref{eq:2.5} with 
$\kappa=\tau+(d+1)/2$.

\section{Collocation method for Zakai equations}\label{sec:3}

Let us describe the collocation methods for (\ref{eq:2.1}). 
In what follows, we always consider the version of $u$, and thus by abuse of notation, 
we write $u$ for $\tilde{u}$. 
Moreover, we restrict ourselves to the class of Wendland kernels 
$\Phi=\Phi_{d,\tau}$ described in Section \ref{sec:2.2}. 
We choose a set $\Gamma=\{x_1,\ldots,x_N\}$ consisting of 
pairwise distinct points such that  
\begin{equation*}
 \Gamma=\{x_1,\ldots,x_N\}\subset (-R,R)^d, 
\end{equation*}
for some $R>0$. 

To construct an approximate solution of Zakai equation, 
we first take a set $\{t_0,\ldots,t_n\}$ of time discretized points such that 
$0=t_0<t_1<\cdots<t_n=T$. 
The solution $u$ of the Zakai equation approximately satisfies 
\begin{equation*}
 u(t_i,x)\simeq u(t_{i-1},x) + \sum_{k=0}^mL_ku(t_{i-1},x)\Delta W_k(t_i), 
\end{equation*}
where $W_0(t)=t$ and $\Delta W_k(t_i)=W_k(t_i)-W_k(t_{i-1})$. 
Since 
$L_ku(t_{i-1},x)\simeq L_kI(u(t_{i-1},\cdot))(x)$, we see 
\begin{equation*}
 u(t_i,x)\simeq u(t_{i-1},x) + \sum_{k=0}^m L_kI(u(t_{i-1},\cdot))(x)\Delta W_k(t_i).  
\end{equation*}
Thus, we define the function $u^h$, a candidate of an approximate solution 
parametrized with a parameter $h>0$, by 
\begin{equation}
\label{eq:3.1}
\begin{aligned}
 u^{h}(t_0,x)&=u_0(x), \quad x\in\mathbb{R}^d, \\
 u^{h}(t_i,x)&=u^{h}(t_{i-1},x)+\sum_{k=0}^mL_k
  I(u^h(t_{i-1},\cdot))(x)
  \Delta W_k(t_i), \quad x\in\mathbb{R}^d, \;\; i=1,\ldots,n. 
\end{aligned}
\end{equation}
With this definition, the $N$-dimensional vector  
$u_i^{h}=(u_{i,1}^{h},\ldots, u_{i,N}^{h})^{\mathsf{T}}$ of the collocation points 
satisfies 
\begin{equation*}
\begin{aligned}
 u_0^{h}&=(u_0(x_1),\ldots,u_0(x_N))^{\mathsf{T}}, \\ 
 u_i^{h} &= u_{i-1}^{h} + \sum_{k=0}^m(A_kA^{-1}u_{i-1}^{h}) \Delta W_k(t_i), 
\quad i=1,\ldots,n. 
\end{aligned}
\end{equation*}
Here, we have set $A_k=(A_{k,j\ell})_{1\le j,\ell\le N}$ with 
$A_{k,j\ell}=L_k\Phi(x-x_{\ell})|_{x=x_j}$. 
This follows from  
\begin{equation*}
 L_ku^{h}(t_i,x_j)=\sum_{\ell=1}^N(A^{-1}u_i^{h})_{\ell}L_k\Phi(x-x_{\ell})|_{x=x_j} 
 =(A_kA^{-1}u_i^{h})_j. 
\end{equation*}

To discuss the error of the approximation above, set 
$\Delta t=\max_{1\le i\le n}(t_i-t_{i-1})$ and consider 
the Hausdorff distance $\Delta x$ between $\Gamma$ and $(-R,R)^d$ by 
\begin{equation*}
 \Delta x = \sup_{x\in (-R,R)^d}\min_{j=1,\ldots,N}|x-x_j|. 
\end{equation*}
Then suppose that 
$\Delta t$, $R$, $N$, and $\Delta x$ are functions of $h$.  
Let $\tilde{\Gamma}$ be a finite subset of $(-R,R)^d$, the set of points at which 
the approximate solution is to be evaluated, which may be different from $\Gamma$. 
For $j=1,\ldots,N$, we write $Q_j$ for the cardinal function defined by 
\[
 Q_j(x) = \sum_{i=1}^N(A^{-1})_{ij}\Phi(x-x_i), \quad x\in\mathbb{R}^d, \;\; j=1,\ldots,N. 
\] 
In what follows, $\#\mathcal{K}$ denotes the cardinality of a finite set $\mathcal{K}$. 
\begin{assum}
\label{assum:3.1}
\begin{enumerate}[\rm (i)]
\item The parameters $\Delta t$, $R$, $N$, and $\Delta x$ satisfy 
$\Delta t\to 0$, $R\to \infty$, $N\to\infty$, and $\Delta x\to 0$ as $h\searrow 0$. 
\item There exist $c_1,c_2,c_3$, positive constants independent of $h$, 
such that for any $|\alpha|_1\le 2$, 
\[
 \max_{x\in\Gamma\cup\tilde{\Gamma}}\#\left\{j\in\{1,\ldots,N\}: 
  |D^{\alpha}Q_j(x)|> \frac{c_1}{N}\right\} 
 \le c_2R^{1/2}\le c_3(\Delta x)^{-(\tau-3/2)/d}.  
\]
\end{enumerate}
\end{assum}


\begin{rem}
\label{rem:3.2}
Suppose that $\Gamma$ is quasi-uniform in the sense that 
\[
 c_4 RN^{-1/d}\le \Delta x\le c_5 RN^{-1/d} 
\]
hold for some positive constants $c_4,c_5$. 
In this case,  
a sufficient condition for which the latter inequality in Assumption \ref{assum:3.1} (ii) holds is  
\begin{equation*}
 R\le c_6 N^{(1-d/(d+2\tau-3))\frac{1}{d}}
\end{equation*}
for some positive constant $c_6$. 
\end{rem}

The approximation error for the Zakai equation is 
estimated as follows: 
\begin{thm}
\label{thm:3.2}
Suppose that Assumptions $\ref{assum:2.1}$ and $\ref{assum:3.1}$ hold. 
Suppose moreover that $\tau\ge 3$.  
Then, there exists $h_0>0$ such that 
\begin{equation*}
\max_{i=1,\ldots,n}\max_{x\in \Gamma\cup\tilde{\Gamma}}\mathbb{E}\left[|u(t_i,x)- u^{h}(t_i,x)|^2
 \right] \le C\left(\Delta t+ (\Delta x)^{2\tau-3}\right), \quad h\le h_0.    
\end{equation*}
\end{thm}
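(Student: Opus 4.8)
The plan is to run the standard one-step-error plus stability argument for recursive schemes, but with the twist that stability must be extracted from the decay hypothesis on $A^{-1}$ in Assumption \ref{assum:3.1}(ii) rather than from a (divergent) condition-number bound. Write $e_i(x) = u(t_i,x) - u^h(t_i,x)$ and split the recursion error at each step into the \emph{consistency error} coming from the Euler--Maruyama time discretization of \eqref{eq:2.2} together with the replacement of $L_k u(t_{i-1},\cdot)$ by $L_k I(u(t_{i-1},\cdot))$, and the \emph{propagated error} coming from applying $\sum_k L_k I(\cdot)\Delta W_k(t_i)$ to $e_{i-1}$. For the consistency part, the time-discretization contribution is handled exactly as in the convergence proof for stochastic Euler schemes: using \eqref{eq:2.2}, Assumption \ref{assum:2.1}, and the a priori Sobolev bounds on $u$, the local error in $L^2(\Omega)$ is $O((t_i - t_{i-1})^2)$ in the drift and $O((t_i - t_{i-1}))$ in the martingale part, so that after summing over $i$ and using the martingale structure (so the martingale increments contribute their variances, not their absolute values) one gets the $O(\Delta t)$ term. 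The single-step interpolation error $\|L_k(u(t_{i-1},\cdot) - I(u(t_{i-1},\cdot)))\|$, measured at the collocation-generated points in $(-R,R)^d$, is controlled by the classical sampling inequality for Sobolev functions (Wendland-type estimates): since $u(t_{i-1},\cdot)\in H^{\nu}$ for every $\nu$ with the stated moment bounds, and $\Phi = \Phi_{d,\tau}$ has native space $H^{\tau+(d+1)/2}$, a derivative of order up to $2$ of the interpolation residual is bounded by $C(\Delta_1 x)^{\tau + (d+1)/2 - 2 - d/2}\,\|u(t_{i-1},\cdot)\|_{H^{\tau+(d+1)/2}}$ on the \emph{bounded} region $(-R,R)^d$, i.e. a power $(\Delta_1 x)^{\tau - 3/2 + \cdots}$; squaring gives the $(\Delta_1 x)^{2\tau-3}$ order (the exact exponent bookkeeping is why $\tau\ge 3$ is needed, so that at least two derivatives are admissible).

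For the propagated-error part, the key is to control $\sup_{x\in(-R,R)^d}\mathbb{E}|L_k I(e_{i-1})(x)|^2$ by $C\sup_{y\in(-R,R)^d}\mathbb{E}|e_{i-1}(y)|^2$, up to a factor that is harmless after multiplication by $\Delta t$ (for the drift, $k=0$) or by the increment variance (for $k\ge 1$). This is precisely where Assumption \ref{assum:3.1}(ii) enters. Writing $L_k I(e_{i-1})(x_j) = (A_k A^{-1} e_{i-1})_j$ and, for general $x\in(-R,R)^d$, $L_k I(e_{i-1})(x) = \sum_{\ell} (A^{-1} e_{i-1})_\ell\, L_k\Phi(x - x_\ell)$, one estimates $|(A^{-1} e_{i-1})_\ell|$ by splitting the sum over the columns of $A^{-1}$ into the "large" entries (at most $c_2(\Delta_2 x)^{-\lambda d}$ of them, each bounded in the crude way via the minimal eigenvalue of $A$, which in turn is bounded below by $C(\Delta_2 x)^{\text{something}}$ by the separation-distance estimates in Wendland Chapter 12) and the "small" entries (each at most $c_1(\Delta_2 x)^d/N$, and there are at most $N$ of them). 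Summing, $|(A^{-1}e_{i-1})_\ell| \le C\big[(\Delta_2 x)^{-\lambda d}\,\eta + (\Delta_2 x)^d\big]\max_j |e_{i-1}(x_j)|$ for an appropriate power $\eta$, and then $|L_k I(e_{i-1})(x)|\le N\cdot\|L_k\Phi\|_\infty\cdot(\text{that bound})$; the number of active terms is $O(R^d (\Delta_2 x)^{-d})$ times the local support count, and $\|L_k\Phi_{d,\tau}\|_\infty$ is a fixed constant since $\Phi_{d,\tau}\in C^{2\tau}$ with $2\tau \ge 2$. Collecting the powers of $\Delta_2 x$, $R$ and $N$, the upper constraint $R^{1/2}\le c_4(\Delta_1 x)^{-(\tau-3/2)/d}$ is exactly what keeps $R^d$ from overwhelming the interpolation gain, while the lower constraint $c_3(\Delta_2 x)^{-(1+\lambda)d}\le R^{1/2}$ (together with $\Delta_2 x\le\Delta_1 x$) makes the $A^{-1}$-decay term summable; the net outcome is a per-step amplification factor of the form $1 + C\Delta t_i$ for the drift and a martingale term whose conditional variance is $\le C\Delta t_i \sup_y\mathbb{E}|e_{i-1}(y)|^2$.

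Putting the two parts together: let $\varepsilon_i := \sup_{x\in(-R,R)^d}\mathbb{E}|e_i(x)|^2$. Taking $\mathbb{E}|\cdot|^2$ of the error recursion, using independence of $\Delta W_k(t_i)$ from $\mathcal{F}_{t_{i-1}}$ (so the cross terms between the $\mathcal{F}_{t_{i-1}}$-measurable propagated-drift error and the martingale increments vanish in expectation) and the Cauchy--Schwarz / Young inequality on the remaining cross terms, one arrives at a discrete Gr\"onwall inequality $\varepsilon_i \le (1 + C\Delta t)\varepsilon_{i-1} + C\big(\Delta t\cdot(t_i - t_{i-1}) + (\Delta_1 x)^{2\tau - 3}(t_i - t_{i-1})\big) + (\text{time-discretization local error})$, whence iterating gives $\max_i \varepsilon_i \le e^{CT}\big(C\Delta t + C(\Delta_1 x)^{2\tau-3}\big)$ for $h$ small enough that the various $o(1)$ quantities in Assumption \ref{assum:3.1}(i) have made the constants uniform; this is the claimed bound, with $h_0$ chosen so that $R, N, 1/\Delta t$ are large enough for all the inequalities above to be in force. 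The main obstacle, and the heart of the argument, is the propagated-error estimate: ensuring that the combined growth from $N$ and $R^d$ copies of $L_k\Phi$, divided by the spectral gap of $A$ and modulated by the $A^{-1}$-decay count, really does close up into a $1 + O(\Delta t)$ factor rather than something that blows up with $h$ — this is the only place where the precise two-sided coupling between $R$, $\Delta_1 x$ and $\Delta_2 x$ in Assumption \ref{assum:3.1}(ii) is used, and getting the exponents of $\Delta_1 x$, $\Delta_2 x$ and $R$ to balance is the delicate bookkeeping (it is also what forces $\tau\ge 3$ and the lower bound $\tau \ge 3/2 + (1+\lambda)d^2$ in Remark \ref{rem:3.2}).
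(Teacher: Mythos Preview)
Your overall architecture is right and matches the paper: split into consistency error $\Theta_{i+1}$ and propagated error $S_{i+1}$, exploit the martingale structure so cross terms vanish or are harmless, derive a recursion $\varepsilon_{i+1}\le (1+C\Delta t)\varepsilon_i + C(\Delta t)^2 + C(\Delta_1 x)^{2\tau-3}\Delta t$, and finish with discrete Gronwall. Your handling of the consistency term (time--H\"older regularity from \eqref{eq:2.2} plus the sampling inequality of Lemma~\ref{lem:3.5}) is essentially what the paper does, and your exponent count $(\Delta_1 x)^{\tau-3/2}$ for second derivatives is correct.

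The genuine gap is the propagated-error bound. You propose to estimate $|(A^{-1}e_{i-1})_\ell|$ by splitting columns of $A^{-1}$ into ``small'' entries (at most $c_1(\Delta_2 x)^d/N$) and ``large'' entries, bounding the latter ``in the crude way via the minimal eigenvalue of $A$''. That does not close. Assumption~\ref{assum:3.1}(ii) controls only the \emph{number} of large entries per row, not their size; the only generic size bound is $|(A^{-1})_{ij}|\le \|A^{-1}\|\le C(\Delta_2 x)^{-2\tau-1}$, so your large-entry contribution is at least of order $(\Delta_2 x)^{-\lambda d-2\tau-1}$ per coefficient, and no admissible choice of $R$ in Assumption~\ref{assum:3.1}(ii) absorbs this. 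The paper makes exactly this point in the remark following Lemma~\ref{lem:3.7}: even with the sharper Demko--Moss--Smith band decay one is led to $(\tau-3/2)/d\ge \tau+(5d+1)/2$, which is impossible. So your ``net outcome is a per-step amplification factor $1+C\Delta t$'' is asserted, not proved.

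What the paper actually does for stability is different and is the heart of the argument. It rewrites $I(f)(x)=\sum_j f(x_j)Q_j(x)$ in terms of the cardinal functions $Q_j(x)=\sum_i (A^{-1})_{ij}\Phi(x-x_i)$ and proves the Lebesgue-constant bound $\sup_x\sum_j|D^{\alpha}Q_j(x)|\le C$ for $|\alpha|_1\le 2$ (Lemma~\ref{lem:3.7}). For $j$ in the ``good'' set $\mathcal{J}(\tilde{x})$ the smallness of $(A^{-1})_{ij}$ and the local support count (Lemma~\ref{lem:3.4}) give $\sum_{j\in\mathcal{J}}|D^\alpha Q_j|\le C$ directly. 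For the ``bad'' $j$'s, one does \emph{not} bound entries of $A^{-1}$ at all: instead one builds, via Kergin interpolation, a polynomial $p$ of degree $\le \tilde\nu\lfloor R^{1/2}\rfloor$ with $p(x_j)=\mathrm{sgn}(D^\alpha Q_j(\tilde{x}))$ at bad nodes, uses Bernstein's inequality and $\Delta_1 x\to 0$ to force $|p|\le 2$ on $(-R,R)^d$, mollifies and cuts off to get $\hat p\in H^{\tau+(d+1)/2}$, and then observes that $\sum_j |D^\alpha Q_j(\tilde{x})|$ is controlled by $|D^\alpha I(\hat p)(\tilde{x})|$, which is bounded via Lemma~\ref{lem:3.5}. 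The two-sided constraint in Assumption~\ref{assum:3.1}(ii) is used precisely here: the lower bound on $R^{1/2}$ makes the number of bad nodes small enough to be interpolated by a polynomial whose degree-to-domain ratio permits the Bernstein argument, and the upper bound on $R^{1/2}$ makes $R^{d/2}(\Delta_1 x)^{\tau-3/2}\le C$ so that $\|\hat p\|_{H^{\tau+(d+1)/2}}$ times the sampling error stays bounded. Once $\sum_j|D^\alpha Q_j(x)|\le C$ is in hand, $\mathbb{E}|D^\alpha I(e_{i-1})(x)|^2\le C\sup_y\mathbb{E}|e_{i-1}(y)|^2$ follows by Cauchy--Schwarz over $j$, and the Gronwall recursion closes. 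Your proposal is missing this duality/Kergin step entirely.
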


%

The rest of this section is devoted to the proof of Theorem \ref{thm:3.2}. 
We start with the following lemma: 
\begin{lem}
\label{lem:3.5}
Suppose that Assumption {\rm \ref{assum:3.1} (i)} and $\tau\ge 3$ hold. Then,  
there exists $h_0>0$ such that 
for any multi-index $\alpha$ with $|\alpha|_1\le 2$ and $f\in H^{\tau+(d+1)/2}(\mathbb{R}^d)$, 
we have 
\begin{equation*}
 \|D^{\alpha}f-D^{\alpha}I(f)\|_{L^{\infty}(-R,R)^d}\le 
 C(\Delta x)^{\tau+1/2-|\alpha|_1}\|f\|_{H^{\tau+(d+1)/2}(\mathbb{R}^d)}, 
 \quad h\le h_0. 
\end{equation*}
\end{lem}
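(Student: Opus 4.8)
The plan is to deduce the estimate from the classical pointwise error bound for interpolation by a kernel whose Fourier transform decays algebraically, the only genuinely new issue being that the domain $(-R,R)^d$ is allowed to grow as $h\searrow 0$.

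The first step is to record what the Wendland kernel gives. By Section~\ref{sec:2.3}, $\Phi=\Phi_{d,\tau}$ satisfies \eqref{eq:2.3} with $\kappa=\tau+(d+1)/2$, so $\mathcal N_{\Phi}(\mathbb R^d)=H^{\tau+(d+1)/2}(\mathbb R^d)$ with equivalent norms by \eqref{eq:2.4}; in particular $\kappa>d/2$, and since $\tau\ge 3$ every multi-index with $|\alpha|_1\le 2$ obeys $|\alpha|_1<\kappa-d/2=\tau+1/2$, which is exactly the range in which $D^{\alpha}(f-I(f))$ is controlled in $L^{\infty}$ with a strictly positive power of the fill distance.

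The second step is to invoke the derivative form of the interpolation error estimate from Chapter~11 of \cite{wen:2010} (together with the norm equivalence \eqref{eq:2.4}): if $\Omega\subset\mathbb R^d$ is open, bounded and satisfies an interior cone condition with angle $\theta$ and radius $r$, then there are $h_0>0$ and $C>0$, depending on $d$, $\tau$, $\theta$, $r$ but not otherwise on $\Omega$, such that for every finite set $\Gamma\subset\Omega$ of pairwise distinct points with fill distance $h_{\Gamma,\Omega}\le h_0$, every $f\in H^{\tau+(d+1)/2}(\mathbb R^d)$, and every $|\alpha|_1\le 2$,
\begin{equation*}
 \|D^{\alpha}f-D^{\alpha}I(f)\|_{L^{\infty}(\Omega)}
 \le C\, h_{\Gamma,\Omega}^{\,\tau+1/2-|\alpha|_1}\,\|f\|_{H^{\tau+(d+1)/2}(\mathbb R^d)}.
\end{equation*}
That $C$ and $h_0$ enter only through $\theta$ and $r$ is where the locality of the estimate matters: it comes from polynomial reproduction on balls of radius comparable to $h_{\Gamma,\Omega}$ and from the pointwise bound on the power function. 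Specializing to $\Omega=(-R,R)^d$: for $R\ge 1$ this rectangle satisfies an interior cone condition with a fixed angle and fixed radius (at any point the cone may be taken pointing into the interior), uniformly in $R$, so $C$ and $h_0$ can be chosen independently of $R$. By Assumption~\ref{assum:3.1}(i), $R\to\infty$ and $\Delta_1x=h_{\Gamma,(-R,R)^d}\to 0$ as $h\searrow 0$; fixing $h_0$ small enough that $R\ge 1$ and $\Delta_1x\le h_0$ whenever $h\le h_0$, and using \eqref{eq:2.4} once more, yields the claimed bound with $h_{\Gamma,\Omega}$ replaced by $\Delta_1x$.

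I expect the only real obstacle to be this uniformity of $C$ in $R$, since the textbook statements carry a constant that a priori could depend on the whole domain. There are two clean ways to handle it. One is to track the constant through the proof in \cite{wen:2010} and verify that it depends on $\Omega$ only via the cone-condition data, which for $(-R,R)^d$ with $R\ge 1$ are $R$-independent. The other is a localization argument: cover $(-R,R)^d$ by a bounded-overlap family of unit cubes, truncated where they meet $\partial(-R,R)^d$ (this preserves a uniform cone condition); on each piece apply the local sampling inequality to $g:=f-I(f)$, which vanishes on $\Gamma$ so that the term involving the values of $g$ on $\Gamma$ drops out; bound the surviving seminorm by $|g|_{H^{\tau+(d+1)/2}(\mathbb R^d)}\le C\|f\|_{H^{\tau+(d+1)/2}(\mathbb R^d)}$ via \eqref{eq:2.5}; and take the supremum over the pieces, the constant being the same for each. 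Either route gives an $R$-independent constant, and the remaining steps — the native-space identification and the bookkeeping of the exponent $\tau+1/2-|\alpha|_1$ — are routine.
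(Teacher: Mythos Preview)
Your proposal is correct but takes a genuinely different route from the paper. The paper handles the $R$-dependence by a rescaling argument: it sets $\tilde f(z)=f(Rz)$ and $\tilde\Gamma=\{x_j/R\}$, applies \cite[Theorem 11.32]{wen:2010} on the \emph{fixed} cube $(-1,1)^d$ to the function $g=f-I(f)$ (which vanishes on $\Gamma$, hence $\tilde g$ vanishes on $\tilde\Gamma$), and then uses the exact scaling identities $D^{\alpha}\tilde g(z)=R^{|\alpha|_1}(D^{\alpha}g)(Rz)$ and $|\tilde g|_{H^{\tau+(d+1)/2}((-1,1)^d)}=R^{\tau+1/2}|g|_{H^{\tau+(d+1)/2}((-R,R)^d)}$. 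The powers of $R$ cancel identically, so the constant coming from the unit cube is the final constant. You instead argue directly that the Wendland constants depend only on the cone-condition data $(\theta,r)$, which for $(-R,R)^d$ with $R\ge 1$ can be chosen uniformly, or alternatively localize by a bounded-overlap cover of unit cubes. Both of your routes are valid; the first does require re-reading the proof in \cite{wen:2010} to confirm the constant's provenance, and the second needs a little care so that the local fill distance on each piece is controlled by $\Delta_1x$ (e.g., enlarge each cube slightly before intersecting with $\Gamma$). The advantage of the paper's rescaling is that it treats \cite[Theorem 11.32]{wen:2010} as a black box on a single fixed domain and lets the homogeneity of the top-order seminorm do the work; the advantage of your approach is that it extends without change to any family of domains sharing a common interior cone condition, not just dilates of a fixed set.
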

\begin{proof}
This result is reported in \cite[Corollary 11.33]{wen:2010} for more general domains. 
However, a simple application of that result leads to an ambiguity of the dependence of 
the constant $C$ on $R$. 
Here we will confirm that we can take $C$ to be independent of $R$.  

Let $f\in H^{\tau+(d+1)/2}(\mathbb{R}^d)$ with $f|_{\Gamma}=0$. 
Set $\tilde{\Gamma}=\{x_1/R,\ldots,x_N/R\}$ and 
$\tilde{f}(z)=f(Rz)$, $z\in (-1,1)^d$. 
Then, $\tilde{f}|_{\tilde{\Gamma}}=f|_{\Gamma}=0$ and 
\begin{equation*}
 \sup_{z\in (-1,1)^d}\min_{\xi\in\tilde{\Gamma}}|\xi-y|
 =\sup_{y\in (-R,R)^d}\min_{j=1,\ldots,N}\left|\frac{x_j}{R}-\frac{y}{R}\right|
 =\frac{\Delta x}{R}. 
\end{equation*}
Since $\Delta x/R\to 0$ as $h\searrow 0$ and $\tau\ge 3$, 
we can apply \cite[Theorem 11.32]{wen:2010} to 
$\tilde{f}$ to obtain 
\begin{equation}
\label{eq:3.2}
 |D^{\alpha}\tilde{f}(z)|\le C(\Delta x/R)^{\tau+1/2-|\alpha|_1}
 |\tilde{f}|_{H^{\tau+(d+1)/2}((-1,1)^d)}, \quad h\le h_0
\end{equation}
for some $h_0>0$. 
It is straightforward to see that 
\[
 D^{\alpha}\tilde{f}(z)=R^{|\alpha|_1}(D^{\alpha}f)(Rz), \quad 
 |\tilde{f}|_{H^{\tau+(d+1)/2}((-1,1)^d)}=R^{\tau+1/2}|f|_{H^{\tau+(d+1)/2}((-R,R)^d)}. 
\]
Substituting these relations into \eqref{eq:3.2}, we have 
\begin{equation}
\label{eq:3.3}
 |D^{\alpha}f(y)|\le C(\Delta_1x)^{\tau+1/2-|\alpha|_1}|f|_{H^{\tau+(d+1)/2}((-R,R)^d)}, 
 \quad y\in (-R,R)^d. 
\end{equation}
This and \eqref{eq:2.5} yield 
\begin{align*}
 \|D^{\alpha}f-D^{\alpha}I(f)\|_{L^{\infty}((-R,R)^d)}
 &\le C(\Delta x)^{\tau+1/2-|\alpha|_1}|f-I(f)|_{H^{\tau+(d+1)/2}((-R,R)^d)} \\ 
 &\le C(\Delta x)^{\tau+1/2-|\alpha|_1}\|f\|_{H^{\tau+(d+1)/2}(\mathbb{R}^d)}. 
\end{align*}
Thus the lemma follows. 
\end{proof}

Observe that for any $f:\mathbb{R}^d\to\mathbb{R}$, 
\begin{equation*}
 I(f)(x)=\sum_{j=1}^N(A^{-1}f|_{\Gamma})_j\Phi(x-x_j)
 = \sum_{j=1}^Nf(x_j)Q_j(x), \quad x\in\mathbb{R}^d.  
\end{equation*}
The following result tells us that the process of iterative kernel-based interpolation is stable 
on $\Gamma\cup\tilde{\Gamma}$, 
which is a key to our convergence analysis.  
\begin{lem}
\label{lem:3.7}
Suppose that Assumption $\ref{assum:3.1}$ and $\tau\ge 3$ hold. Then, 
there exists $h_0>0$ such that   
\begin{equation*}
 \sup_{0<h\le h_0}\max_{x\in\Gamma\cup\tilde{\Gamma}}
 \sum_{j=1}^N|D^{\alpha}Q_j(x)|<\infty, \quad |\alpha|_1\le 2.  
\end{equation*}
\end{lem}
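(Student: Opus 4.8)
The plan is to localize the sum over $j$ by exploiting the compact support of the Wendland kernel, and then to control the entries of $A^{-1}$ through the decay hypothesis in Assumption \ref{assum:3.1}(ii). First I would record the relevant properties of $\Phi=\Phi_{d,\tau}$: since $\tau\ge 3$ we have $\Phi\in C^{2\tau}(\mathbb{R}^d)\subset C^{6}(\mathbb{R}^d)$, and $\phi_{d,\tau}(r)=0$ for $r\ge 1$, so $\Phi$ is supported in the closed unit ball. Hence there is a constant $M=M(d,\tau)$ with $|D^{\alpha}\Phi(z)|\le M$ for every $z\in\mathbb{R}^d$ and $|\alpha|_1\le 2$, and $D^{\alpha}\Phi(x-x_i)=0$ whenever $i\notin\mathcal{I}(x)$; therefore $D^{\alpha}Q_j(x)=\sum_{i\in\mathcal{I}(x)}(A^{-1})_{ij}D^{\alpha}\Phi(x-x_i)$ and
\begin{equation*}
 \sum_{j=1}^N|D^{\alpha}Q_j(x)|\le M\sum_{i\in\mathcal{I}(x)}\sum_{j=1}^N|(A^{-1})_{ij}|.
\end{equation*}
By Assumption \ref{assum:3.1}(i) and Remark \ref{rem:3.1.5} we may fix $h_0>0$ so small that $\Delta_2x\le 1$ for $h\le h_0$, and then Lemma \ref{lem:3.4} gives $\#\mathcal{I}(x)\le\nu(\Delta_2x)^{-d}$ uniformly in $x\in(-R,R)^d$.

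Next I would split, for each fixed $i$, the indices $\{1,\dots,N\}=S_i\cup L_i$ with $L_i=\{j:|(A^{-1})_{ij}|>c_1(\Delta_2x)^d/N\}$. Assumption \ref{assum:3.1}(ii) gives $\#L_i\le c_2(\Delta_2x)^{-\lambda d}$, while $\sum_{j\in S_i}|(A^{-1})_{ij}|\le N\cdot c_1(\Delta_2x)^d/N=c_1(\Delta_2x)^d$; so the contribution of the small entries to the double sum is at most $M\,\#\mathcal{I}(x)\,c_1(\Delta_2x)^d\le M\nu c_1$, which is already a bound of the required type. Everything thus reduces to estimating $\sum_{i\in\mathcal{I}(x)}\sum_{j\in L_i}|(A^{-1})_{ij}|$.

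For the large entries I would not estimate them crudely by $\#L_i\cdot\|A^{-1}\|_2$, since the condition number of $A$ diverges polynomially in $(\Delta_2x)^{-1}$ (cf.\ \cite{wen:2010}) and such a bound blows up; the cancellation present in the sum must be used. The observation to exploit is that $\{D^{\alpha}Q_j(x)\}_{j=1}^N=A^{-1}(g_x|_{\Gamma})$ is exactly the coefficient vector of the interpolant $I(g_x)$ of $g_x(y):=D^{\alpha}\Phi(x-y)$, and that for $|\alpha|_1\le 2$, $\tau\ge 3$ one sees from \eqref{eq:2.3} that $g_x\in H^{\tau+(d+1)/2}(\mathbb{R}^d)=\mathcal{N}_{\Phi}(\mathbb{R}^d)$ with $\|g_x\|_{\mathcal{N}_{\Phi}(\mathbb{R}^d)}$ a finite constant independent of $x$. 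Applying Lemma \ref{lem:3.5} to $g_x-I(g_x)$ together with \eqref{eq:2.5} bounds $\|I(g_x)\|_{C^{2}((-R,R)^d)}$ by $C\|g_x\|_{\mathcal{N}_{\Phi}(\mathbb{R}^d)}$ with $C$ independent of $R$; the large-entry count $\#L_i\le c_2(\Delta_2x)^{-\lambda d}$ and $\#\mathcal{I}(x)\le\nu(\Delta_2x)^{-d}$ produce competing negative powers of $\Delta_2x$, which are meant to be absorbed using the two-sided relation $c_3(\Delta_2x)^{-(1+\lambda)d}\le R^{1/2}\le c_4(\Delta_1x)^{-(\tau-3/2)/d}$ of Assumption \ref{assum:3.1}(ii) and $\Delta_2x\le\Delta_1x$ (Remark \ref{rem:3.1.5}), so that the smoothing gain from Lemma \ref{lem:3.5} on the fixed-norm function $g_x$ compensates the growth of the condition number; adding the two contributions would finish the proof.

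\medskip
\noindent\textbf{Where the work is.} The only genuinely delicate step is the control of the large-entry sum: one has to trade the polynomial blow-up of $\|A^{-1}\|_2$ against the polynomial growth of the domain radius $R$ and the sparsity count $c_2(\Delta_2x)^{-\lambda d}$, and the bookkeeping of these competing powers of $\Delta_1x$, $\Delta_2x$ and $R$ is precisely why Assumption \ref{assum:3.1}(ii) is imposed together with $\tau\ge 3$. Checking that, after this cancellation, the resulting constant is genuinely independent of $h$ is the crux of the lemma; the remaining steps are routine.
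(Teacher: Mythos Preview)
Your treatment of the small entries is fine and matches the paper's. The gap is in the large-entry step. You correctly note that $(D^{\alpha}Q_j(x))_{j}$ is the coefficient vector $A^{-1}(g_x|_{\Gamma})$ of the interpolant $I(g_x)$ with $g_x(y)=D^{\alpha}\Phi(x-y)$, and that $\|I(g_x)\|_{C^2((-R,R)^d)}$ and $\|I(g_x)\|_{\mathcal{N}_{\Phi}}$ are bounded uniformly in $x$. But neither of these bounds controls the $\ell^1$ norm $\sum_j|(A^{-1}g_x|_{\Gamma})_j|$ of the coefficient vector: passing from function norms of a kernel expansion back to its coefficients is precisely an inverse (Bernstein-type) estimate and costs at least $\lambda_{\min}(A)^{-1/2}\sim(\Delta_2x)^{-(\tau+1/2)}$, so the negative powers you hoped to absorb via Assumption~\ref{assum:3.1}(ii) reappear with a worse exponent. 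Nothing in Lemma~\ref{lem:3.5} or \eqref{eq:2.5} converts ``$I(g_x)$ is nice'' into ``its coefficients are small in $\ell^1$''; your sketch does not contain a mechanism for this passage, and the balance of powers cannot be closed along this route.

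The paper supplies the missing idea by a duality trick. One uses instead the tautology $\sum_{j}f(x_j)D^{\alpha}Q_j(\tilde x)=D^{\alpha}I(f)(\tilde x)$ for \emph{any} $f$, and manufactures an $f$ with $f(x_j)=\mathrm{sgn}(D^{\alpha}Q_j(\tilde x))$ on the bad indices $j\notin\mathcal{J}(\tilde x)$. Since $\#\{j\notin\mathcal{J}(\tilde x)\}\le C(\Delta_2x)^{-(1+\lambda)d}\le CR^{1/2}$ by Assumption~\ref{assum:3.1}(ii) and Lemma~\ref{lem:3.4}, Kergin interpolation yields a polynomial $p$ of degree at most $\tilde\nu\lfloor R^{1/2}\rfloor$ matching these signs; Bernstein's inequality on $(-R,R)^d$ (where the scaling $\deg/R\sim R^{-1/2}$ is what makes the square root in Assumption~\ref{assum:3.1}(ii) appear) gives $\|\nabla p\|_{L^\infty}\le C\|p\|_{L^\infty}$, and a bootstrap against the nodal values forces $\|p\|_{L^\infty}\le 2$. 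After Lipschitz extension, mollification and cutoff one gets $\hat p\in H^{\tau+(d+1)/2}(\mathbb{R}^d)$ with $\|\hat p\|_{H^{\tau+(d+1)/2}}\le CR^{d/2}$, and Lemma~\ref{lem:3.5} then yields $|D^{\alpha}I(\hat p)(\tilde x)|\le C+C(\Delta_1x)^{\tau-3/2}R^{d/2}\le C$ by the upper inequality in Assumption~\ref{assum:3.1}(ii). This sign-interpolating polynomial is the device that replaces the inverse estimate you are implicitly invoking; without it the argument does not close.
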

\begin{proof}
Fix $\tilde{x}\in\Gamma\cup\tilde{\Gamma}$ and $|\alpha|_1\le 2$. 
First consider the set 
\begin{equation*}
 \mathcal{J}(\tilde{x}):=\left\{j\in\{1,\ldots,N\}: |D^{\alpha}Q_j(\tilde{x})|\le \frac{c_1}{N} 
 \right\}. 
\end{equation*}
Then of course we have 
\begin{equation}
\label{eq:3.4}
 \sum_{j\in\mathcal{J}(\tilde{x})}|D^{\alpha}Q_j(\tilde{x})|\le c_1. 
\end{equation}
By Assumption \ref{assum:3.1} (ii), 
there exists $\tilde{\nu}\in\mathbb{N}$ such that 
\begin{equation*}
 \#\{j: j\notin\mathcal{J}(\tilde{x})\}\le \tilde{\nu}\lfloor R^{1/2}\rfloor. 
\end{equation*}
Then, by Kergin interpolation (see Kergin \cite{ker:1980}) 
there exists a polynomial $p$ on $\mathbb{R}^d$ with degree at most 
$\tilde{\nu}\lfloor R^{1/2}\rfloor$ that interpolates 
$\mathrm{sgn}(D^{\alpha}Q_j(\tilde{x}))$ at $x_j$ for all $j\notin\mathcal{J}(\tilde{x})$. 
This leads to 
\begin{equation}
\label{eq:3.5}
 \sum_{j\notin\mathcal{J}(\tilde{x})}|D^{\alpha}Q_j(\tilde{x})|
  =\sum_{j\notin\mathcal{J}(\tilde{x})}\mathrm{sgn}(D^{\alpha}Q_j(\tilde{x}))
    D^{\alpha}Q_j(\tilde{x}) 
 = \sum_{j\notin\mathcal{J}(\tilde{x})}p(x_j)D^{\alpha}Q_j(\tilde{x}). 
\end{equation}
Bernstein inequality (see 
Proposition 11.6 in \cite{wen:2010}) and Assumption \ref{assum:3.1} (ii) implies that 
\[
 \max_{|\alpha|_1\le \nu_1+1}\sup_{x\in (-R,R)^d}|D^{\alpha}p(x)|\le C\sup_{x\in (-R,R)^d}|p(x)|,  
\]
where $\nu_1=\tau+\min\{\kappa\in\mathbb{Z}: \kappa\ge (d+1)/2\}$. 
Thus, for $x\in (-R,R)^d$, take a nearest $x_j$ to observe 
\[
 |p(x)|\le |p(x_j)| + |p(x)-p(x_j)| 
  \le 1 + C\Delta x\sup_{y\in (-R,R)^d}|p(y)|, 
\]
from which and $\Delta x\to 0$ this polynomial $p$ satisfies  
$|p(x)|\le 2$ for $x\in (-R,R)^d$. 
Further, we have 
$\max_{|\alpha|_1\le \nu_1+1}\sup_{x\in [-R,R]^d}|D^{\alpha}p(x)|\le C_0$ 
for some $C_0>0$ that is independent of $R$ and $\tilde{x}$. 
Then, by Whitney's extension theorem (see, e.g., Stein \cite{ste:1970}), 
there exists a function $\tilde{p}$ on $\mathbb{R}^d$ such that 
$\tilde{p}=p$ on $[-R,R]^d$ and 
\begin{equation*}
 \max_{|\alpha|_1\le \nu_1}\sup_{x\in [-R,R]^d}|D^{\alpha}\tilde{p}(x)|\le C_0. 
\end{equation*}
Then consider the function $\hat{p}\in H^{\nu_1}(\mathbb{R}^d)$ defined by 
$\hat{p}(x)=\tilde{p}(x)\zeta(x/R)$, $x\in\mathbb{R}^d$ where 
$\zeta$ is a $C^{\infty}$-function such that $0\le \zeta(x)\le 1$ for $x\in\mathbb{R}^d$, 
$\zeta(x)=1$ for $|x|\le 1$, and $\zeta(x)=0$ for $|x|>1+\tilde{c}$ for some $\tilde{c}>0$. 

With these modifications and in view of \eqref{eq:3.4} and \eqref{eq:3.5}, we obtain 
\begin{align*}
 \sum_{j=1}^N|D^{\alpha}Q_j(\tilde{x})| 
 &\le \sum_{j\notin\mathcal{J}(\tilde{x})}p(x_j)D^{\alpha}Q_j(\tilde{x}) + C 
 = \sum_{j=1}^Np(x_j)D^{\alpha}Q_j(\tilde{x}) -\sum_{j\in\mathcal{J}(\tilde{x})}
     p(x_j)D^{\alpha}Q_j(\tilde{x}) + C \\ 
 &\le \sum_{j=1}^Np(x_j)D^{\alpha}Q_j(\tilde{x}) +2\sum_{j\in\mathcal{J}(\tilde{x})}
     |D^{\alpha}Q_j(\tilde{x})| + C \\ 
 &\le \sum_{j=1}^N\hat{p}(x_j)D^{\alpha}Q_j(\tilde{x}) + C.   
\end{align*}
Moreover, by Lemma \ref{lem:3.5},  
\begin{align*}
 |D^{\alpha}I(\hat{p})(\tilde{x})|&\le |D^{\alpha}I(\hat{p})(\tilde{x})-D^{\alpha}\hat{p}(\tilde{x})| 
 +|D^{\alpha}\bar{p}(\tilde{x})| \\ 
 &\le C(\Delta x)^{\tau-3/2}\|\hat{p}\|_{H^{\tau+(d+1)/2}(\mathbb{R}^d)} 
  + C \\ 
 &\le C(\Delta x)^{\tau-3/2}\|\hat{p}\|_{H^{\nu_1}((-(1+\tilde{c})R,(1+\tilde{c})R)^d)} + C \\ 
 &\le C\|\hat{p}\|_{C^{\nu_1}(\mathbb{R}^d)}R^{d/2}(\Delta x)^{\tau-3/2} + C. 
\end{align*}
Assumption \ref{assum:3.1} (ii) 
and the boundedness of $\|\hat{p}\|_{C^{\nu_1}(\mathbb{R}^d)}$ now 
lead to the conclusion of the lemma. 
\end{proof}

\begin{rem}
One might ask if Assumption \ref{assum:3.1} (ii) can be simplified in some sense. 
This problem, however, seems to be nontrivial. 
For example, the classical result by Demko et.el \cite{dem-etal:1984} tells us that  
if a matrix $A$ is $m$-banded, symmetric and positive definite then we have 
\[
 |(A^{-1})_{ij}|\le \frac{2}{\lambda_{min}}\left(1-\frac{2}{\sqrt{r}+1}\right)^{\frac{2}{m}|i-j|}. 
\]
Here, $\lambda_{min}$ and $r$ are the minimum eigenvalue and the condition number of $A$, 
respectively. Our interpolation matrix satisfies $\lambda_{min}\ge Cq^{2\tau+1}$ and 
$r\le Cq^{-2\tau-d-1}$, where $q=\min_{i\neq j}|x_i-x_j|$. 
Moreover, if the matrix is banded, then it is necessarily $Cq^{-d}$-banded. 
So a sufficient condition for which 
$|(A^{-1})_{ij}|\le Cq^d/N$ holds is 
\[
 q^{-2\tau-1}\exp(-Cq^{\tau+(3d+1)/2}|i-j|)\le Cq^d/N. 
\]
This is equivalent to 
$|i-j|\ge Cq^{-\tau-(3d+1)/2}\log(Nq^{-2\tau-d-1})$. 
The arguments in the proof of Lemma 3.7 then leads to the condition 
\[
 Cq^{-\tau-(5d+1)/2}\log(Nq^{-2\tau-d-1})\le \sqrt{R}\le 
 C(\Delta x)^{-(\tau-3/2)/d}, 
\]
which is similar to the latter part of Assumption \ref{assum:3.1} (ii). 
If this condition holds, then we must have 
$(\tau-3/2)/d\ge \tau+(5d+1)/2$ and this is of course impossible.    
\end{rem}

\begin{proof}[Proof of Theorem $\ref{thm:3.2}$]
First, for $i=0,\ldots,n-1$ and $x\in\Gamma\cup\tilde{\Gamma}$, we have 
\begin{align*}
 (u(t_{i+1},x)-u^{h}(t_{i+1},x))^2&=(u(t_i,x)-u^{h}(t_i,x))^2+ 
 (S_{i+1}(x))^2 +(\Theta_{i+1}(x))^2 \\
 &\quad + 2(u(t_i,x)-u^{h}(t_i,x))S_{i+1}(x) 
 +2\Theta_{i+1}(x)S_{i+1}(x) \\ 
 &\quad + 2(u(t_i,x)-u^{h}(t_i,x))\Theta_{i+1}(x). 
\end{align*}
Here, for $i=0,\ldots,n-1$ and $x\in\Gamma\cup\tilde{\Gamma}$, 
\begin{align*}
 S_{i+1}(x) &= \sum_{k=0}^mL_kI(u(t_i)-u^{h}(t_i))(x)\Delta W_{t_{i+1}}^k, \\
 \Theta_{i+1}(x)&=\sum_{k=0}^m\int_{t_i}^{t_{i+1}}(L_ku(s,x)-L_kI(u(t_i))(x))dW_s^k. 
\end{align*}
It is straightforward to see that 
\begin{align*}
 &\mathbb{E}(S_{i+1}(x))^2 \\ 
 &=\mathbb{E}|L_0I(u(t_i)-u^h(t_i))(x)|^2(t_{i+1}-t_i)^2 
 +\sum_{k=1}^m\mathbb{E}|L_kI(u(t_i)-u^h(t_i))(x)|^2(t_{i+1}-t_i) \\ 
 &\le C\sum_{|\alpha|_1\le 2}\mathbb{E}|D^{\alpha}I(u(t_i)-u^h(t_i))(x)|^2\Delta t. 
\end{align*}
By Cauchy-Schwartz inequality and Lemma \ref{lem:3.7}, 
\begin{align*}
 &\mathbb{E}|D^{\alpha}I(u(t_i)-u^h(t_i))(x)|^2 
 = \mathbb{E}\left|\sum_{j=1}^N(u(t_i,x_j)-u^h(t_i,x_j))D^{\alpha}Q_j(x)\right|^2 \\
 &=\sum_{j,\ell=1}^N\mathbb{E}[(u(t_i,x_j)-u^h(t_i,x_j))
  (u(t_i,x_{\ell})-u^h(t_i,x_{\ell}))]D^{\alpha}Q_j(x)D^{\alpha}Q_{\ell}(x) \\ 
 &\le \sum_{j,\ell=1}^N(\mathbb{E}|u(t_i,x_j)-u^h(t_i,x_j)|^2)^{1/2}
  (\mathbb{E}|u(t_i,x_{\ell})-u^h(t_i,x_{\ell})|^2)^{1/2}
  |D^{\alpha}Q_j(x)||D^{\alpha}Q_{\ell}(x)| \\ 
 &\le \max_{y\in\Gamma\cup\tilde{\Gamma}}\mathbb{E}|u(t_i,y)-u^h(t_i,y)|^2
  \left(\sum_{j=1}^N|D^{\alpha}Q_j(x)|\right)^2 \\ 
 &\le C\max_{y\in\Gamma\cup\tilde{\Gamma}}\mathbb{E}|u(t_i,y)-u^h(t_i,y)|^2. 
\end{align*}
Hence, 
\begin{equation}
\label{eq:3.6}
 \mathbb{E}(S_{i+1}(x))^2\le C\max_{y\in\Gamma\cup\tilde{\Gamma}}
 \mathbb{E}|u(t_i,y)-u^h(t_i,y)|^2\Delta t.
\end{equation}

Next, it follows from It{\^o} isometry that 
\begin{align*}
 &\mathbb{E}|\Theta_{i+1}(x)|^2 \\ 
 &\le 2\mathbb{E}\left|\int_{t_i}^{t_{i+1}}
  (L_0u(s,x)-L_0I(u(t_i))(x))ds\right|^2 
  + 2\mathbb{E}\left|\sum_{k=1}^m\int_{t_i}^{t_{i+1}}
  (L_ku(s,x)-L_kI(u(t_i))(x))dW_s^k\right|^2 \\ 
 &\le 2\Delta t\mathbb{E}\int_{t_i}^{t_{i+1}}|L_0u(s,x)-L_0I(u(t_i))(x)|^2ds 
  + 2\sum_{k=1}^m\mathbb{E}\int_{t_i}^{t_{i+1}}|L_ku(s,x)-L_kI(u(t_i))(x)|^2ds \\ 
 &\le C\sum_{|\alpha|_1\le 2}\mathbb{E}\int_{t_i}^{t_{i+1}}
  |D^{\alpha}u(s,x)-D^{\alpha}I(u(t_i))(x)|^2ds. 
\end{align*}
Again by It{\^o} isometry, 
\begin{align*}
 \mathbb{E}|D^{\alpha}u(s,x)-D^{\alpha}u(t_i,x)|^2
 &=\mathbb{E}\left|\sum_{k=0}^m\int_{t_i}^sD^{\alpha}u(r,x)dW_r^k\right|^2 
  \le C\sum_{k=0}^m\mathbb{E}\int_{t_i}^s|D^{\alpha}L_ku(r,x)|^2dr \\
  &\le C\mathbb{E}\sup_{0\le t\le T}\|u(t)\|_{C^4(\mathbb{R}^d)}^2(s-t_i). 
\end{align*}
Further, Lemma \ref{lem:3.5} means 
\begin{align*}
 \mathbb{E}|D^{\alpha}u(t_i,x)-D^{\alpha}I(u(t_i))(x)|^2
 \le C(\Delta x)^{2\tau-3}\mathbb{E}\|u\|^2_{H^{\tau+(d+1)/2}(\mathbb{R}^d)}. 
\end{align*}
Thus, 
\begin{equation}
\label{eq:3.7}
 \mathbb{E}|\Theta_{i+1}(x)|^2\le C(\Delta t)^2 + C(\Delta x)^{2\tau-3}\Delta t. 
\end{equation}
The arguments used in the estimations above yield 
\begin{equation}
\label{eq:3.8}
\begin{aligned}
 &\mathbb{E}(u(t_i,x)-u^h(t_i,x))S_{i+1}(x) \\ 
 &=\mathbb{E}(u(t_i,x)-u^h(t_i,x))L_0I(u(t_i)-u^h(t_i))(x)(t_{i+1}-t_i) \\ 
 &\le (\mathbb{E}|u(t_i,x)-u^h(t_i,x)|^2)^{1/2} 
     (\mathbb{E}|L_0I(u(t_i)-u^h(t_i))(x)|^2)^{1/2}\Delta t \\
 &\le C\max_{y\in\Gamma\cup\tilde{\Gamma}}\mathbb{E}|u(t_i,y)-u^h(t_i,y)|^2\Delta t. 
\end{aligned}
\end{equation}
Here we have again used the boundedness of the coefficients of $L_0$ 
and Lemma \ref{lem:3.7} to derive the last inequality. 
Furthermore, we obtain
\begin{equation*}
\begin{aligned}
 &\mathbb{E}(u(t_i,x)-u^h(t_i,x))\Theta_{i+1}(x) \\
 &=\mathbb{E}(u(t_i,x)-u^h(t_i,x))
  \int_{t_i}^{t_{i+1}}(L_0u(s,x)-L_0I(u(t_i))(x))ds \\ 
 &\le (\mathbb{E}|u(t_i,x)-u^h(t_i,x)|^2)^{1/2} 
  \left(\mathbb{E}\left|\int_{t_i}^{t_{i+1}}(L_0u(s,x)
    -L_0I(u(t_i))(x))ds\right|^2\right)^{1/2} \\ 
 &\le (\mathbb{E}|u(t_i,x)-u^h(t_i,x)|^2)^{1/2}
   \left(\Delta t\mathbb{E}\int_{t_i}^{t_{i+1}}\left|L_0u(s,x)
    -L_0I(u(t_i))(x)\right|^2ds\right)^{1/2} \\ 
 &=(\mathbb{E}|u(t_i,x)-u^h(t_i,x)|^2\Delta t)^{1/2}
   \left(\mathbb{E}\int_{t_i}^{t_{i+1}}\left|L_0u(s,x)
    -L_0I(u(t_i))(x)\right|^2ds\right)^{1/2} \\ 
 &\le 2\mathbb{E}|u(t_i,x)-u^h(t_i,x)|^2\Delta t 
    + 2\mathbb{E}\int_{t_i}^{t_{i+1}}\left|L_0u(s,x)
    -L_0I(u(t_i))(x)\right|^2ds. 
\end{aligned}
\end{equation*}
and so 
\begin{equation}
\label{eq:3.9}
\begin{aligned}
 &\mathbb{E}(u(t_i,x)-u^h(t_i,x))\Theta_{i+1}(x) \\
 &\le 2\max_{y\in\Gamma\cup\tilde{\Gamma}}\mathbb{E}|u(t_i,y)-u^h(t_i,y)|^2)\Delta t  
 +C(\Delta t)^2 + C(\Delta x)^{2\tau-3}\Delta t. 
\end{aligned}
\end{equation}

Then, from \eqref{eq:3.6}--\eqref{eq:3.9} we have, for $i=0,\ldots,n-1$, 
\begin{align*}
 &\max_{x\in\Gamma\cup\tilde{\Gamma}}\mathbb{E}|u(t_{i+1},x)-u^{h}(t_{i+1},x)|^2 \\ 
 &\le (1+C\Delta t)\max_{x\in\Gamma\cup\tilde{\Gamma}}\mathbb{E}|u(t_{i},x)-u^{h}(t_{i},x)|^2 
  + C(\Delta t)^2 + C(\Delta x)^{2\tau-3}\Delta t. 
\end{align*}
A simple application of the discrete Gronwall lemma now leads to what we aim to prove. 
\end{proof}

\section{Numerical experiments}\label{sec:4}

In this section, we apply our collocation method to the one-dimentional Zakai equation  
\begin{equation}
\label{eq:4.1}
\left\{
\begin{aligned}
 du(t,x)&=\left(\frac{1}{2}\frac{\partial^2}{\partial x^2} u(t,x) 
  - \frac{\partial}{\partial x}(\tanh(x)u(t,x))\right)dt 
  + u(t,x)dW(t), \quad 0\le t\le 1, \\
 u(0,x)&=\frac{1}{\sqrt{2\pi}}\cosh (x)e^{-|x|^2/2}. 
\end{aligned}
\right.
\end{equation}
The unique solution $u(t,x)$ to \eqref{eq:4.1} is given by 
\begin{equation*}
 u(t,x)=\frac{1}{\sqrt{2\pi}}\cosh(x)
  \exp\left(W(t)-\frac{3t}{2} - \frac{|x|^2}{2(1+t)}\right). 
\end{equation*}

We use the Wendland kernel $\phi_{1,4}$ scaled by some positive constant for 
the performance test. 
We choose the time grid as a uniform one in $[0,1]$, and 
as suggested in Remark \ref{rem:3.2}, we define $\Gamma$ by the uniform spatial grid points 
on $[-R+2R/(N+1),R-2R/(N+1)]$ where 
$R = (1/5)N^{(1-1/(2\tau-2))}$, while the set of evaluation points 
$\tilde{\Gamma}=\{\xi_1,\ldots,\xi_{41}\}$ by 
the equi-spaced grid points on $[-2,2]$. 

To check the validity of Assumption \ref{assum:3.1} (ii), we plot 
\[
 \iota(N)=\max_{|\alpha|_1\le 2}\max_{x\in\Gamma\cup\tilde{\Gamma}}
  \#\{j: |D^{\alpha}Q_j(x)|> 6/N\}
\]
in Figure \ref{fig:4.1}. 
\begin{figure}[htbp]
\centering
\includegraphics[width=0.5\columnwidth, bb=0 0 512 384]
{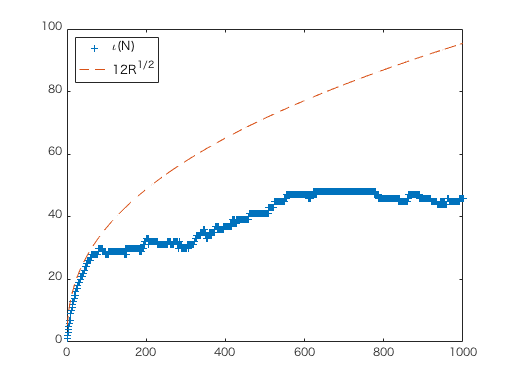}
\caption{Plotting $\iota(N)$ and $12\sqrt{R}$ for $N=1,2,\ldots,1000$.}
\label{fig:4.1}
\end{figure}
We can see that $\iota(N)<12\sqrt{R}$ for all $N\le 1000$. 
Thus, Assumption \ref{assum:3.1} (ii) seems to be satisfied with $c_1=6$ and $c_2=12$
for the sequence of the tuning parameters defined by $N$ from $1$ at least to $1000$. 

\begin{figure}[htbp]
\centering
\subfigure{\includegraphics[width=0.425\columnwidth, bb=0 0 512 384]
{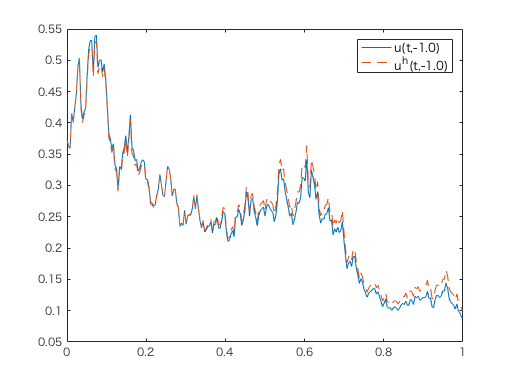}}~  
\subfigure{\includegraphics[width=0.425\columnwidth, bb=0 0 512 384]
{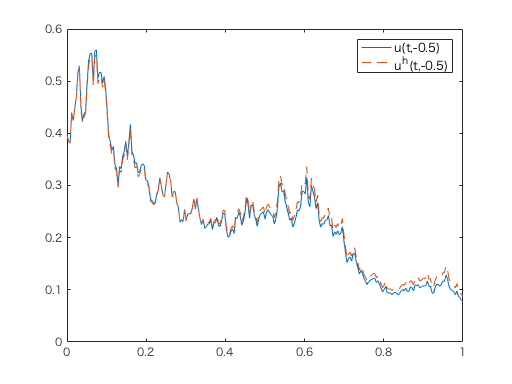}} \\ 
\subfigure{\includegraphics[width=0.425\columnwidth, bb=0 0 512 384]
{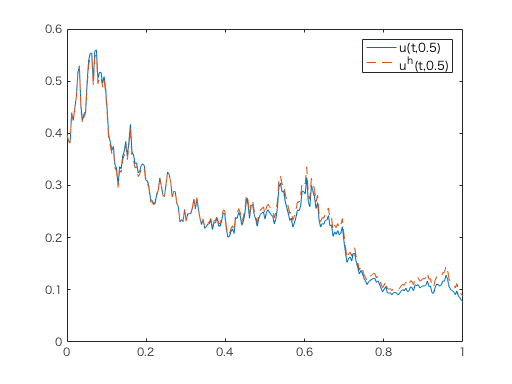}}
\subfigure{\includegraphics[width=0.425\columnwidth, bb=0 0 512 384]
{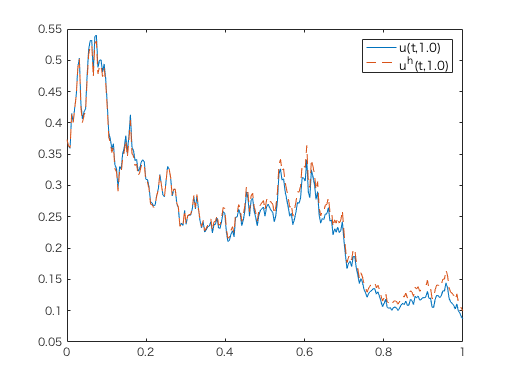}} \\
\caption{Comparing the exact solution (solid line) and 
the approximated one (dashed line) at $x$ = 
$-1, -1/2, 1/2, 1$, in the case of $N=2^5$ and $n=2^{8}$.}
\label{fig:4.2}
\end{figure}

\begin{figure}[htbp]
\centering
\subfigure{\includegraphics[width=0.425\columnwidth, bb=0 0 512 384]
{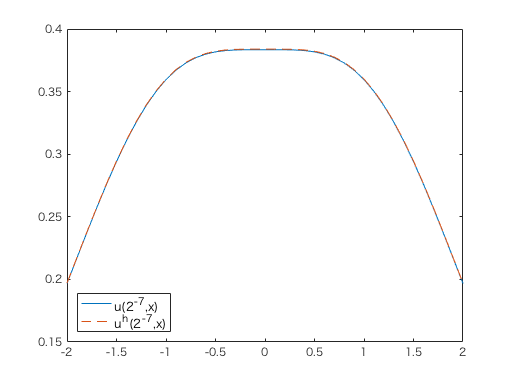}}~  
\subfigure{\includegraphics[width=0.425\columnwidth, bb=0 0 512 384]
{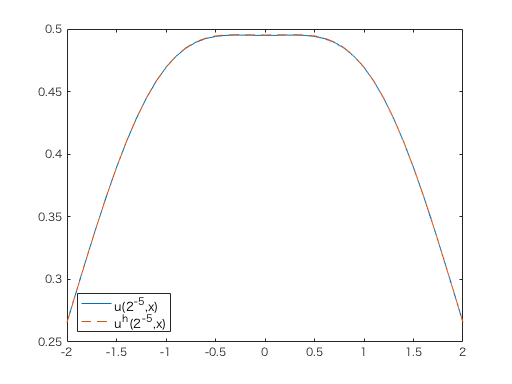}} \\ 
\subfigure{\includegraphics[width=0.425\columnwidth, bb=0 0 512 384]
{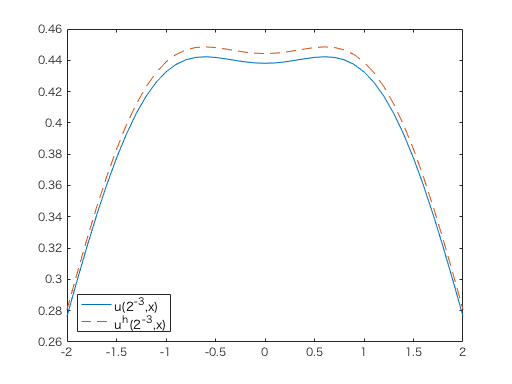}}
\subfigure{\includegraphics[width=0.425\columnwidth, bb=0 0 512 384]
{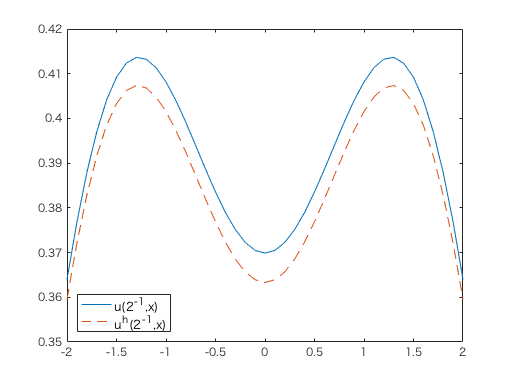}} \\
\caption{Comparing the exact solution (solid line) and 
the approximated one (dashed line) 
at time = $2^1\Delta t, 2^3\Delta t, 2^5\Delta t, 2^7\Delta t$, in the case of 
$N=2^5$ and $n=2^{8}$.}
\label{fig:4.3}
\end{figure}

Figure \ref{fig:4.2} plots sample paths of $u(\cdot,x)$ and 
$u^h(\cdot,x)$ for several spatial points. 
Figure \ref{fig:4.3} plots snapshots of the time evolutions 
of $u(t,\cdot)$ and $u^h(t,\cdot)$. 
The both show that our collocation method yields a good approximation as well as 
the accumulation of the error near the time maturity cannot be negligible. 
To compare an averaged performance, we compute the root mean squared errors 
averaged over $10000$ samples, defined by 
\begin{align*}
\text{RMSE} &:= \sqrt{\frac{1}{10000\times 41(n+1)}\sum_{i=0}^n\sum_{j=1}^{41}
 \sum_{\ell=1}^{10000}
|u_{\ell}(t_i,\xi_j)-u^{h}_{\ell}(t_i,\xi_j)|^2},    
\end{align*} 
for several values of $N$ and $n$. 
Here, $u_{\ell}$ and $u^{h}_{\ell}$ are 
the exact solution and approximate solution at $\ell$-th trial, respectively.  

As another comparison, we compute numerical solutions by 
the implicit Euler finite difference method for the test equation, which is described as follows: 
\[
\left\{
\begin{aligned}
\tilde{u}(0,x) &= u(0,x), \quad x\in\tilde{\Gamma}, \\ 
\tilde{u}(t_i,\pm R) &= 0, \quad i=0,\ldots,n, \\
\tilde{u}(t_{i+1},x)&=\tilde{u}(t_i,x) + \tilde{L}\tilde{u}(t_{i+1},x) + \tilde{u}(t_i,x)\Delta W(t_{i+1}), 
\quad i=0,\ldots,n-1, \;\; x\in\tilde{\Gamma}. 
\end{aligned}
\right.
\]
Here, $\tilde{\Gamma}=\{-R+j\Delta x: j=1,\ldots,N\}$ with $\Delta x=2R/(N+1)$ and 
$\tilde{L}$ denotes the corresponding finite difference operator. 
Then $\tilde{u}$ converges to $u$ as $R\to\infty$, $\Delta t\to 0$, and $\Delta x\to 0$. 
See Gerencs{\'e}r and Gy{\"o}ngy \cite{ger-gyo:2017}. 

Table \ref{table:4.1} shows that the resulting RMSE's 
are sufficiently small for all pairs $(N,n)$ although its decrease is nonmonotonic. 
Here $\text{RMSE}_{\text{fd}}$ denotes the corresponding the root mean squared errors 
for the finite difference method, where the set of evaluation points is taken to be 
$\tilde{\Gamma}$ itself. 
We can conclude that Theorem \ref{thm:3.2} is well consistent with the results of 
our experiments, and that the kernel-based collocation method outperforms the implicit Euler 
finite difference method when the length $R$'s are set to be the ones that guarantee 
the convergence of the former. 
\begin{table}[htb]
\centering
\begin{tabular}[t]{ccccccc} 
\toprule
$N$ & $R$ & $(\Delta x)^{\tau-3/2}$ & $n$  & $\sqrt{\Delta t}$ & $\text{RMSE}$ & 
$\text{RMSE}_{\text{fd}}$ \\  \midrule
$2^4$ & 2.0159 & 0.0274 & $2^{6}$  & 0.1250 &  0.0714 & 0.2163 \\  
         &            &              & $2^{8}$  & 0.0625 &  0.0726  & 0.2171 \\  
         &            &             & $2^{10}$  & 0.0312 &  0.0745 & 0.2267 \\ \midrule
$2^5$ & 3.5919 & 0.0221 & $2^{6}$   & 0.1250 &  0.0323 & 0.1981 \\ 
             &          &           & $2^{8}$ & 0.0625 & 0.0252 & 0.2017 \\ 
            &          &            & $2^{10}$ & 0.0312 &  0.0234 & 0.1991 \\ \midrule 
$2^6$  & 6.4000 & 0.0172 & $2^{6}$  & 0.1250 & 0.0333 & 0.2020\\
            &            &            & $2^{8}$  & 0.0625 & 0.0261 & 0.2088 \\ 
            &          &             & $2^{10}$  & 0.0312 & 0.0241 & 0.2067 \\ \bottomrule
\end{tabular}
\caption{The resulting root mean squared errors 
for several pairs $(N,n)$.}
\label{table:4.1}
\end{table}

\subsection*{Acknowledgements}

This study is partially supported by JSPS KAKENHI Grant Number JP17K05359. 

\bibliographystyle{plain}
\bibliography{../mybib}

\end{document}